\theoremstyle{plain}
\newtheorem{Thm}{Theorem}[section]
\newtheorem{Pro}[Thm]{Proposition}
\newtheorem{Cor}[Thm]{Corollary}
\theoremstyle{definition}
\newtheorem{Def}[Thm]{Definition}
\newcommand{\ZZ}{{\mathbb{Z}}}
\newcommand{\QQ}{{\mathbb{Q}}}
\newcommand{\CC}{{\mathbb{C}}}
\newcommand{\PP}{{\mathbb{P}}}
\newcommand{\KK}{{\mathcal{K}}}
\newcommand{\HH}{{\mathcal{H}}}
\newcommand{\UU}{{\mathcal{U}}}
\newcommand{\tensor}{\otimes}
\newcommand{\dirlim}{\varinjlim}
\newcommand{\invlim}{\varprojlim}
\begin{document}


\title[Dixmier - Douady  for Dummies]{ The Dixmier - Douady Invariant for Dummies}
\author[]{  Claude L. Schochet}
\address{Mathematics Department,
          Wayne State University,
          Detroit MI 48202}
\email{claude@math.wayne.edu}

 \begin{abstract}
The Dixmier-Douady invariant is the primary tool in the classification
 of continuous trace $C^*$-algebras.   This expository note explores its properties from the
 perspective of classical algebraic topology.

\centerline{version 4-29-09}
\end{abstract}

\maketitle
\tableofcontents

\begin{section}{Introduction}

The Dixmier-Douady invariant is the primary tool in the classification
 of continuous trace $C^*$-algebras. These algebras
 have come to the fore in recent years because
 of their relationship to twisted K-theory and via twisted K-theory to branes, gerbes, and string theory.

 This note sets forth
the basic properties of the Dixmier-Douady invariant
 using only classical homotopy and bundle theory.
 Algebraic topology enters the scene at once since the algebras in
 question are algebras of sections of certain fibre bundles.

 The
  results stated are all contained in the original papers of
  Dixmier and Douady \cite{DD}, Donovan and Karoubi \cite{DK}, and Rosenberg \cite{Ros}. Our treatment is novel in that
  it avoids the sheaf-theoretic techniques of the original proofs
  and substitutes   more classical algebraic topology.
Some of the proofs are borrowed directly from the recent
  paper of Atiyah and Segal \cite{AS}. Those interested in
  more detail and especially in the connections with analysis
  should consult Rosenberg \cite{Ros}, the definitive work of
  Raeburn and Williams \cite{RW}, as well as the recent paper
  of Karoubi \cite{K} and the  book by Cuntz, Meyer, and
  Rosenberg \cite{CMR}. We briefly discuss twisted $K$-theory itself, mostly in order
  to direct the interested reader to some of the (exponentially-growing) literature on the
  subject.

It is a pleasure to acknowledge the assistance of Dan Isaksen, Max Karoubi, N. C. Phillips and Jonathan Rosenberg in
the preparation of this paper.

\end{section}

\begin{section}{Fibre Bundles}

Suppose that $G$ is a topological group and $G \to T \to X$ is a principal $G$-bundle over the compact space
$X$. Then up to equivalence it is classified by a map $f$ to the classifying space $BG$ and there is a pullback diagram
\[
\begin{CD}
G @>>>   G  \\
@VVV   @VVV  \\
T  @>>>   EG  \\
@VVV   @VVV  \\
X @>f>>  BG
\end{CD}
\]
where the right column is the universal principal $G$-bundle.

Suppose
further that $F$ is some $G$-space. Then following Steenrod \cite{Steen} we may form the associated fibre bundle
\[
F \longrightarrow  T \times _G F   \longrightarrow X
\]
with fibre $F$ and structural group $G$.    Pullbacks commute with taking
associated bundles, so  there is a pullback diagram

\[
\begin{CD}
F @>>>   F  \\
@VVV   @VVV  \\
T \times _G F  @>>>   EG \times _G F  \\
@VVV   @VVV  \\
X @>f>>  BG.
\end{CD}
\]
\vglue .1in

Now suppose that $M$ is some fixed $C^*$-algebra, soon to be either the matrix ring $M_n = M_n(\CC )$
for some $n < \infty $ or the compact operators $\KK $ on some
separable Hilbert space $\HH $. Take
$G = U(M)$,
the group of unitaries of the $C^*$-algebra.  (If $M$ is not unital then we modify by first adjoining
a unit canonically to form the unital algebra $M^+$ and  then define $U(M)$ to be the kernel of the
natural homomorphism $U(M^+) \to U(M^+/M ) \cong S^1$.)  Then $U(M)$ acts naturally on $M$ by
conjugation;   denote $M$ with this action as $M^{ad}$. The center $ZU(M)$ of $U(M)$ acts trivially, and so
the action descends to an action of the {\it{projective unitary group}} $PU(M) = U(M)/ZU(M) $ on $M$,
denoted $M^{ad} $.  Note that if $M$ is simple then its center is just $\CC $ and $ZU(M) \cong S^1$ so
that $PU(M)$ is just the quotient group $(U(M))/S^1$.

Having  fixed $M$, let
\[
\zeta:\qquad   PU(M) \longrightarrow T  \to X
\]
be a principal $PU(M)$-bundle over a compact space $X$. Form the associated
fibre bundle
\[
\PP\zeta :\qquad
M^{ad}  \longrightarrow T \times _{PU(M)}  M^{ad}  \overset{p}\longrightarrow X  .
\]
This fibre bundle always has non-trivial sections. Define $A_\zeta $ to be the space of sections:
\[
A_\zeta = \Gamma (\PP\zeta ) =   \{ s: X \longrightarrow  T \times _{PU(M)}  M^{ad}  \,\,|\,\,  ps = 1 \}.
\]
This is a $C^*$-algebra with pointwise operations that are well-defined because we are using the
adjoint action.  It is unital if $M$ is unital.  If $M = M_n$ or M = $\KK $ then this is a continuous
trace $C^*$-algebra.  If $X$ is locally compact but not compact then $A_\zeta $ is still defined by using
sections that vanish at infinity and it is not unital.

Note that if $\PP\zeta $ is a trivial fibre bundle then sections correspond to functions $X \to M$ and hence
\[
A_\zeta \cong C(X)\otimes M
\]
where $C(X)$ denotes the $C^*$-algebra of continuous complex-valued functions on $X$.

 (If $X$
is only locally compact then we use $C_o $ to denote continuous functions vanishing at infinity.)

Continuous trace $C^\ast $-algebras may be defined intrinsically, of course. Here is one approach. If
$A$ is a (complex) $C^\ast $-algebra, then let $\hat A$ denote the set of unitary equivalence classes of
irreducible $*$-representations of $A$ with the Fell topology (cf. \cite{RW}).

\begin{Def} Let $X$ be a second countable locally compact Hausdorff space. A {\emph{continuous trace
$C^\ast $-algebra with $\hat A = X$ }} is a $C^\ast $-algebra $A$ with $\hat A = X$ such that the set
\[
\{ x \in A \,\,|\,\,  \text{the map} \,\,\pi \to tr(\pi(a)\pi(a)^*) \,\,\text{is finite and continuous on}\,\, \hat A \}
\]
is dense in $A$.
\end{Def}

From the definition it is easy to see that commutative $C^*$-algebras  $C_o(X)$ as well as stable
commutative   $C^*$-algebras  $C_o(X, M_n)$ and   $C_o(X , \KK )$ are continuous trace. In fact
every continuous trace algebra arises as a bundle of sections of the type we have been discussing.
\end{section}

\begin{section} {Products  }

Vector spaces come equipped with natural direct sum and tensor
product operations, and these pass over to vector bundles. Thus
if $E_1 \to X$ and $E_2 \to X$ are complex vector bundles of
dimension $r$ and $s$ respectively then we may form bundles
$E_1\oplus E_2 \to X$ of dimension $r + s$ and $E_1 \tensor E_2 \to X$
of dimension $rs$. There are two corresponding operations on
classifying spaces. The one that concerns us is the tensor
product operation. Fix some unitary isomorphism of vector spaces
\[
\CC ^r \otimes \CC ^s \cong \CC ^{rs}.
\]
 This isomorphism is unique up to homotopy, since the various
unitary groups are connected.   Let $U_n = U(M_n(\CC ))$. This determines a
homomorphism
\[
U_r \times U_s \overset{\otimes }\longrightarrow U_{rs}
\]
and associated map on classifying spaces
\[
BU_r \times BU_s \overset{\otimes }\longrightarrow BU_{rs}
\]
given by the composite
\[
BU_r \times BU_s     \,\cong\, B(U_r \times U_s)      \overset{B(\otimes )}\longrightarrow BU_{rs}  .
\]
Let $[X, Y] $ denote   homotopy classes of maps and recall that if $X$ is compact and connected then
isomorphism classes of complex $n$-plane vector bundles over $X$ correspond to elements of $[X, BU_n ]$.
Then this construction
 induces an operation
\[
[X, BU_r ] \times [X, BU_s]  \overset{\otimes }\longrightarrow
[X, BU_{rs} ].
\]
which does indeed correspond to the tensor product operation on bundles. Precisely, if
$E_1 \to X $ and $E_2 \to X$ are represented by $f_1$ and $f_2 $ respectively, then the tensor product
bundle $E_1 \otimes E_2 \to X$ is represented by $f_1 \otimes f_2 $.  (This holds at once for compact connected
spaces. If $X$ is not connected then one checks this on each component.)

The inclusion
\[
U_r \cong U_r \times \{1\} \to U_r \times U_s \to U_{rs}
\]
is denoted
\[
\alpha _{rs} : U_r \to U_{rs} .
\]

The center of $U_k$ is the group $S^1$ regarded as matrices of
the form $zI$ where $z$ is a complex number of norm $1$.
The quotient group $PU_k $ is the {\emph {projective unitary  group}}.
The fibration $S^1 \to U_k \to PU_k $ induces the sequence
\[
0 \to \ZZ  \overset{k}\longrightarrow \ZZ \to \ZZ /k  \to 0\
\]
on fundamental groups,
and, in particular, $\pi _1(PU_k) \cong \ZZ /k$.

  There is a natural induced map and commuting diagram

\[
\begin{CD}
 S^1  \times   S^1  @>>> S^1 \\\
@VVV      @VVV   \\
U_r \times U_s @>>>  U_{rs} \\
@VVV   @VVV  \\
PU_r \times PU_s @>>>  PU_{rs}
\end{CD}
\]
and this induces a tensor product operation and a commuting
diagram
\[
\begin{CD}
BU_r \times BU_s @>>> BU_{rs} \\
@VVV   @VVV  \\
BPU_r \times BPU_s @>>>  BPU_{rs}
\end{CD}
\]
It is easy to see that
\[
\pi _2(BPU_k) \cong \pi _1(PU_k) \cong \ZZ /k
\]
and that the natural map $\alpha _{rs} : PU_r \to PU_{rs}$
induces a commuting diagram
\[
\begin{CD}
\pi _2(BP_{r}) @>{(\alpha _{rs})_*}>>   \pi _2(BP_{rs}) \\
@VV\cong V     @VV\cong V  \\
\ZZ /r  @>{s}>>   \ZZ /{rs}  .
\end{CD}
\]

\vglue .1in

There is a similar structure in infinite dimensions. Fix some
separable Hilbert space $\HH $
with associated group of unitaries $\UU $ on which we impose
the strong operator topology.
The
group  $\UU $ is contractible in this topology (cf.\   \cite{RW},
Lemma 4.72).
 Fix
 some unitary isomorphism
$ \HH \otimes \HH \cong \HH $.
This is unique up to homotopy since $\UU $ is path-connected.
Then there is a canonical homomorphism
\[
\UU \times \UU  \overset{\otimes}\longrightarrow \UU
\]
and associated maps on classifying spaces
\[
\begin{CD}
B\UU  \times B\UU  @>{\otimes}>>  B\UU  \\
@VVV    @VVV  \\
BP\UU \times BP\UU  @>{\otimes}>>  BP\UU
\end{CD}
\]
where $P\UU $ denotes the infinite projective unitary group.

The action of $S^1 $ on $\UU$
 is free
and
thus
 \[
 P\UU \simeq BS^1 \simeq K(\ZZ, 2).
 \]
This implies that
\[
BP\UU \simeq K(\ZZ, 3)  .
\]

It is simpler to separate the discussion of finite and infinite
dimensional bundles at this point.


\begin{section}{A note on cohomology for compact spaces}

If $X$ is a finite complex then the Eilenberg-Steenrod uniqueness theorem guarantees for us that
singular, simplicial, representable and \v Cech cohomology theories all coincide. Moving up to compact
spaces, one must pause to reconsider the question. The natural choice in the classical Dixmier-Douady context is
\v Cech cohomology, as this relates best to sheaf theories, and so the Dixmier-Douady invariant
was originally defined to take values in $\check H^3(X; \ZZ)$. However, a classical homotopy approach dictates
defining $H^3(X; \ZZ ) = [X, K(\ZZ, 3)]$.  Fortunately these two functors agree on compact spaces; the result
is again due to Eilenberg and Steenrod.

\begin{Pro}  \cite{ES}  On the category of compact spaces,
\v Cech cohomology is representable. That is, there is a
natural isomorphism
\[
\check{H}^n(X; \ZZ ) \,\cong \, [X, K(\ZZ , n) ].
\]
\end{Pro}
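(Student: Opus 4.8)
The plan is to compute both functors as direct limits over the directed system of finite open covers of $X$, and to exploit the fact, quoted at the start of this section, that on finite complexes the two cohomology theories already coincide.

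First recall that $\check H^n(X;\ZZ)$ is by definition the direct limit
\[
\check H^n(X;\ZZ) \,=\, \dirlim_{\mathcal U} H^n(N\mathcal U;\ZZ),
\]
where $\mathcal U$ runs over the finite open covers of the compact space $X$, directed by refinement, and $N\mathcal U$ denotes the nerve of $\mathcal U$, a finite simplicial complex. A partition of unity subordinate to $\mathcal U$ determines a canonical map $\pi_{\mathcal U}\colon X \to N\mathcal U$; different partitions give homotopic maps, these maps are compatible up to homotopy under refinement, and for $X$ compact Hausdorff they exhibit $X$ as the inverse limit $\invlim_{\mathcal U} N\mathcal U$.

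Second, since each nerve $N\mathcal U$ is a finite complex, the Eilenberg-Steenrod uniqueness theorem gives a natural isomorphism $H^n(N\mathcal U;\ZZ)\cong [N\mathcal U, K(\ZZ,n)]$, where we take $K(\ZZ,n)$ to be a CW complex. Naturality in simplicial maps lets us pass to the direct limit over refinements, so that
\[
\check H^n(X;\ZZ) \,\cong\, \dirlim_{\mathcal U} [N\mathcal U, K(\ZZ,n)].
\]
The canonical maps $\pi_{\mathcal U}$ then assemble into a homomorphism
\[
\Phi\colon \dirlim_{\mathcal U} [N\mathcal U, K(\ZZ,n)] \longrightarrow [X, K(\ZZ,n)],
\qquad [f]\longmapsto [f\circ \pi_{\mathcal U}],
\]
and it remains to show that $\Phi$ is a bijection.

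The crux is this last step, the continuity of the representable functor $[-,K(\ZZ,n)]$ along $X=\invlim_{\mathcal U} N\mathcal U$. Surjectivity asks that every $g\colon X\to K(\ZZ,n)$ be homotopic to a map factoring through some nerve: this is where compactness of $X$ is essential, since the image of $X$ lands in a finite subcomplex of $K(\ZZ,n)$, which is an ANR, and a map into an ANR out of an inverse limit of compact spaces factors up to homotopy through a finite stage. Injectivity asks that if $f\circ\pi_{\mathcal U}\simeq f'\circ\pi_{\mathcal U}$ as maps on $X$, then $f$ and $f'$ already agree after passing to a sufficiently fine refinement; this is the same factorization argument applied relatively to the homotopy $X\times I\to K(\ZZ,n)$. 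I expect this compactness-and-approximation argument to be the main obstacle. Once it is in place, the naturality of each construction above yields the naturality of the resulting isomorphism, completing the proof.
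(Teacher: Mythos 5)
Your proposal is correct and follows essentially the same route as the paper: write $X$ as an inverse limit of finite complexes (you make the system explicit via nerves of finite covers), use continuity of \v Cech theory on one side, and prove continuity of $[-,K(\ZZ,n)]$ on the other by factoring maps and homotopies through a finite stage. The paper simply cites Eilenberg--Steenrod (Theorem 11.9, p.~287) for that last factorization step where you sketch the ANR argument, so the two proofs coincide in substance.
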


\begin{proof} The natural isomorphism is well-known for $X$ a
finite complex, by the Eilenberg-Steenrod uniqueness theorem.
Suppose that $X$ is a compact space. Then write $X = \invlim X_j$
for some inverse system of finite complexes. (See \cite{ES} chapters IX,  X, and XI  for
open covers, nerves, and inverse limits.)
Continuity of
\v Cech theory implies that
\[
\check{H}^n(X; \ZZ )  \cong \dirlim H^n(X_j ; \ZZ ) .
\]
The maps $X \to X_j$ induce natural maps
\[
[X_j , K(\ZZ, n) ] \to [X , K(\ZZ, n) ]
\]
and these coalesce to form
\[
\Phi: \dirlim [X_j , K(\ZZ, n) ]  \to [X , K(\ZZ, n) ].
\]

Claim: the map $\Phi $ is a bijection. The key fact needed
is the following result of Eilenberg - Steenrod (\cite{ES}, p. 287,
Theorem 11.9): if $ X = \invlim X_j $
is compact, $Y$ is a simplicial complex, and $f: X \to Y$
then up to homotopy $f$ factors through one of the $X_j$. This
implies immediately that $\Phi $ is onto. On the other hand, if
$g: X_j \to Y$ and the composite $X \to X_j \to Y$ is null-homotopic
then the null-homotopy factors through some $X_k \times [0,1] $ and
hence $[g] = 0$.
\end{proof}

\end{section}

\begin{section} {Bundles with fibre $\KK $}

Recall that $\KK = \KK (\HH)$ denotes the algebra of compact
operators on a separable Hilbert space $\HH $.
Let
\[
\zeta : \qquad P\UU \longrightarrow T \longrightarrow X
\]
be a principal $P\UU $-bundle with associated $C^*$-algebra $A_\zeta $.
All automorphisms of $\KK = \KK (\HH)$ are given by conjugation by
unitary operators on the Hilbert space $\HH $, so the group of
unitaries $\UU$ acts on $\KK $ by the adjoint action. The center
of the group is just $S^1 $ and it acts trivially, of course, and
so
\[
\mathrm{Aut}(\KK ) \cong \UU /S^1 = P\UU
\]
the infinite projective unitary group.
Thus
\[
[X, BP\UU ] \cong    [X, K(\ZZ , 3) ] \cong    H^3(X; \ZZ ).
\]
We may regard maps $X \to BP\UU $ as {\emph{projective vector
bundles}} in analogy with projective representations.

The resulting $C^*$-algebras $A_\zeta $ are {\emph{stable}} in the sense that
$A_\zeta \otimes \KK \cong A_\zeta $.

Define the {\emph {Dixmier-Douady}}
 invariant $\delta (A_\zeta )   $ of the
 $C^*$-algebra $A_\zeta $ to be the homotopy class of a map
 \[
f: \hat A \to BP\UU \cong K(\ZZ, 3)
\]
that classifies the bundle $E \to X$.

We note that given $A_\zeta $ then its Dixmier-Douady invariant lies
naturally in the group $H^3(\hat A_\zeta  ; \ZZ )$. The identification
of $\hat A_\zeta $ with $X$ is only given mod the group
of homeomorphisms of $X$, and hence the Dixmier-Douady invariant is
only defined modulo the action of the homeomorphism group of $X$ on $H^3(X;\ZZ )$.
Of course this action  preserves the order of the element $\delta (A_\zeta )$.

  So we have established the first parts of the following
Dixmier-Douady result:

\begin{Thm} \cite {DD}, \cite{Ros}
Let $X$ be a compact space.  Then:
\begin{enumerate}
\item There is a natural isomorphism
\[
\delta : [X, BP\UU ] \overset{\cong}\longrightarrow \check{H}^3(X, \ZZ ) .
\]
\item    Suppose given a
principal $P\UU $-bundle $\zeta $,  associated fibre bundle $\PP\zeta $,
and associated
$C^*$-algebra $A_\zeta $. Then $\delta (A_\zeta ) = 0$ if and only if $\PP\zeta $ is
equivalent to a trivial matrix bundle,  and in that case
\[
A_\zeta  \cong C(X) \tensor \KK  .
\]
\item The Dixmier-Douady invariant is additive, in the sense that
\[
\delta (A_{\zeta _1 \otimes \zeta _2}) = \delta  (A_{{\zeta _1}}) + \delta (A_{{\zeta _2}}).
\]
\item The invariant respects conjugation:
\[
\delta (A_{{\zeta ^*}}) = - \delta (A_{\zeta }).
\]
\item
Every element of $\check{H}^3(X; \ZZ )$
may be realized as the Dixmier-Douady invariant of some
infinite-dimensional bundle and associated $C^*$-algebra.
\end{enumerate}
\end{Thm}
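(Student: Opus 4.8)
The plan is to take the five assertions in turn, leaning on the two identifications already established: the homotopy equivalence $BP\UU\simeq K(\ZZ,3)$ and the representability Proposition, which together give the natural bijection $[X,BP\UU]\cong\check H^3(X;\ZZ)$ underlying statement (1). Since $\delta$ is by definition the homotopy class of the classifying map read through these identifications, statement (1) is immediate, and statement (5) is simply its surjectivity: given a class in $\check H^3(X;\ZZ)\cong[X,K(\ZZ,3)]\cong[X,BP\UU]$, I would choose a representing map $f\colon X\to BP\UU$, let $\zeta$ be the principal $P\UU$-bundle it classifies, and form $\PP\zeta$ and $A_\zeta$; then $\delta(A_\zeta)$ is the prescribed class. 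Both of these invoke the standard classification of principal $P\UU$-bundles by $[X,BP\UU]$ set up at the start.

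For statement (2), I would argue that $\delta(A_\zeta)=0$ holds precisely when $f$ is null-homotopic, which by that classification is equivalent to $\zeta$ being the trivial $P\UU$-bundle. One direction is then immediate: a trivial principal bundle yields a trivial associated bundle $\PP\zeta\cong X\times\KK$, and, as noted earlier for trivial fibre bundles, its sections are just maps $X\to\KK$, so $A_\zeta\cong C(X)\otimes\KK$. For the converse I would use $\mathrm{Aut}(\KK)\cong P\UU$: because the fibre $\KK$ carries a faithful $P\UU$-action, a trivialization of $\PP\zeta$ as a bundle of $C^*$-algebras is the same datum as a trivialization of the principal $P\UU$-bundle $\zeta$, so $\PP\zeta$ trivial forces $f$ null-homotopic and hence $\delta(A_\zeta)=0$.

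The substance is statement (3). Here I would show that the tensor-product pairing $\otimes\colon BP\UU\times BP\UU\to BP\UU$ realizes the additive $H$-space structure on $K(\ZZ,3)$, i.e. that $\otimes^*\iota=\iota\times 1+1\times\iota$, where $\iota\in H^3(BP\UU;\ZZ)$ is the fundamental class. Since $H^1$ and $H^2$ of $K(\ZZ,3)$ vanish, the degree-three part of $H^*(BP\UU\times BP\UU)$ is freely generated by $\iota\times1$ and $1\times\iota$, so it suffices to pin down the two coefficients, equivalently to compute $\otimes$ on $\pi_3(BP\UU)\cong\ZZ$. The commuting diagrams relating $S^1\times S^1\to S^1$, $\UU\times\UU\to\UU$, and $P\UU\times P\UU\to P\UU$ let me transport this computation down the fibration $S^1\to\UU\to P\UU$: under the natural isomorphisms $\pi_3(BP\UU)\cong\pi_2(P\UU)\cong\pi_1(S^1)$ the pairing becomes the effect of the central multiplication $(z,w)\mapsto zw$ on $\pi_1(S^1)$, which is addition $\ZZ\times\ZZ\to\ZZ$. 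With $\otimes^*\iota$ identified, additivity follows formally: writing $f_i$ for the classifying maps and $\Delta$ for the diagonal of $X$, the bundle $\zeta_1\otimes\zeta_2$ is classified by $\otimes\circ(f_1\times f_2)\circ\Delta$, and pulling back $\iota$ gives $f_1^*\iota+f_2^*\iota=\delta(A_{\zeta_1})+\delta(A_{\zeta_2})$.

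Statement (4) follows by the same transport principle: complex conjugation on $\HH$ induces $u\mapsto\bar u$ on $\UU$ and hence on $P\UU$, and on the center $S^1$ this is $z\mapsto\bar z=z^{-1}$, so it acts by $-1$ on $\pi_1(S^1)$ and, by naturality of $\pi_3(BP\UU)\cong\pi_2(P\UU)\cong\pi_1(S^1)$, by $-1$ on $\pi_3(BP\UU)$, giving $\delta(A_{\zeta^*})=-\delta(A_\zeta)$; alternatively one deduces it from (3) once $\zeta\otimes\zeta^*$ is seen to be trivial. I expect the one genuine obstacle to be the computation in (3): everything else is formal once one knows that $\otimes$ induces addition on $\pi_3$, and verifying this honestly requires the naturality of the connecting isomorphisms along the tensor-product maps of fibrations, which is exactly where the central-multiplication computation is needed.
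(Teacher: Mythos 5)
Your proposal is correct and follows essentially the same route as the paper: parts (1), (2), and (5) reduce to the equivalence $BP\UU\simeq K(\ZZ,3)$ together with the representability of \v{C}ech cohomology, and additivity is proved by delooping the tensor-product pairing to $P\UU\times P\UU\to P\UU$ and identifying it with the addition on $K(\ZZ,2)$ via the class $(1,1)$. Your computation of that class by transporting the pairing down the fibration $S^1\to\UU\to P\UU$ to the central multiplication on $S^1$ supplies the detail the paper merely asserts, and your conjugation argument for (4) is the ``similar argument'' the paper omits.
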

\begin{proof} Only (3) and (4)
remain to be demonstrated. Part (3) comes down to an analysis of the
commutative diagram
\[
\begin{CD}
BP\UU \times BP\UU  @>\simeq >> K(\ZZ , 3) \times K(\ZZ , 3) \\
@VV{\otimes}V  @VVV  \\
BP\UU  @>\simeq >>   K(\ZZ, 3)
\end{CD}
\]
which deloops to
\[
\begin{CD}
P\UU \times P\UU  @>\simeq >> K(\ZZ , 2) \times K(\ZZ , 2) \\
@VV{\otimes}V  @VV{m}V  \\
P\UU  @>\simeq >>   K(\ZZ, 2).
\end{CD}
\]

The map $m$ is determined up to homotopy by its representative
in
\[
\check{H}^2( K(\ZZ , 2) \times K(\ZZ , 2) ; \ZZ ) \cong \ZZ \oplus \ZZ
\]
and this class is $(1,1)$ so that $m$ is indeed the map inducing
addition in $H^2(- ; \ZZ )$.

Part (4) is a similar argument which we omit.
\end{proof}

This result may be viewed   in a more bundle-theoretic manner. The
fibration
\[
S^1 \to \UU \to P\UU
\]
induces an exact sequence
\[
[X , K(\ZZ, 2) ] \to [X, B\UU ] \overset{\epsilon}\to [X, BP\UU ]
\overset{\delta}\to
[X, K(\ZZ, 3)]
\]
 which, after identifications, becomes
 \[
Vect _1(X)  \to Vect_\infty (X) \overset{\epsilon}\to
PVect_\infty (X) \overset{\delta}\to   H^3(X ;  \ZZ )
\]
where $Vect_k (X)$ denotes isomorphism classes of vector
bundles over $X$ of dimension $k$, $PVect_\infty (X)  $ denotes
 isomorphism classes of
projective vector bundles over $X$,
and we have identified
\[
[X , K(\ZZ, 2) ] \cong
 \check{H}^2(X; \ZZ ) \cong Vect_1(X)
 \]
using the first Chern class of the bundle.

The map $\epsilon $ takes an infinite-dimensional vector bundle $V \to X$
and associates to it the matrix bundle
\[
\epsilon (V \to X ) \,\,\,=\,\,\,   End(V) \to X
\]
where $End(V)_x = End(V_x)$,
and so if $\delta (A_\zeta ) = 0 $ then the bundle $\PP\zeta $
is isomorphic to a bundle of endomorphisms: $\PP\zeta  \cong End(V)$
as bundles. Then we use the fact that every vector bundle over
$X$ with infinite dimensional fibres is trivial as a vector
bundle (since $\UU $ is contractible), and thus there are
bundle isomorphisms
\[
\PP\zeta  \cong End(V) \cong End(X \times \HH ) \cong X \times \KK
\]
so that
\[
A_\zeta  \cong C(X) \otimes \KK
\]
as $C^*$-algebras. In fact, recalling that $\UU $ is contractible, $\UU \simeq *$,
we may extend the sequence above to read
\[
[X , K(\ZZ, 2) ] \to [X, * ] \overset{\epsilon}\to [X, BP\UU ]
\overset{\delta}\to
[X, K(\ZZ, 3] \to [X, *]
\]
and then deduce that $\delta $ is also onto.

\end{section}

 \begin{section}{Bundles with fibre $M_n(\CC )$}

The inclusion of $S^1$ as the center of $U_n$ gives rise
   to a fibration sequence
\[
S^1 \to U_n \to PU_n \to K(\ZZ , 2) \to BU_n \to BPU_n
\overset{\delta }\longrightarrow K(\ZZ, 3) .
\]

For $n \geq 2$ the map $U_n \to PU_n$ induces an isomorphism on
homotopy. Passing to classifying spaces, this yields
\[
\pi _2(BPU_n) \cong \ZZ /n
\]
as previously noted, and
\[
\pi _j(BU_n ) \cong \pi _j(BPU_n)  \qquad j > 2 .
\]
The {\emph{Dixmier-Douady invariant}} is defined to be the induced
map
\[
\delta : [X, BPU_n ] \to [X, K(\ZZ , 3)] \cong \check{H}^3(X;\ZZ ).
\]

There is a long exact sequence
\[
[X, K(\ZZ , 2 )] \to [X, BU_n] \overset{\epsilon}\longrightarrow [X, BPU_n ]
\overset{\delta}\longrightarrow [X, K(\ZZ , 3 )]
\]
which translates into
\[
\check{H}^2(X ; \ZZ ) \to Vect_n(X) \overset{\epsilon}\longrightarrow
PVect _n(X)
\overset{\delta}\longrightarrow \check{H}^3(X;\ZZ ) .
\]
  The map $\epsilon $
is defined as follows. Given a complex vector bundle $V \to X$,
then
\[
\epsilon (V \to X) \,\,=\,\,\,\,  \mathrm{End}(V) \to X
\]
where $\mathrm{End}(V) \to X $ is the endomorphism bundle of $V$; the fibre over a
point $x$ is just $\mathrm{End}(V_x)$.
This yields the third Dixmier-Douady result:

\begin{Pro}\label{endpro} Suppose that
$X$ is compact. Let
$\zeta $ be a principal $PU_n$-bundle over $X$ with associated bundle $\PP\zeta $ and
$C^*$-algebra $A_\zeta $.
 Suppose that
 $\delta (A_\zeta ) = 0$.
 Then there is a complex vector bundle   $V \to X$ of dimension $n$ over $X$
 and a bundle isomorphism
 \[
 \begin{CD}
 \PP\zeta  @>\cong >>  \mathrm{End}(V) \\
 @VVV       @VVV      \\
 X  @>{1}>>  X .
 \end{CD}
 \]
 \end{Pro}

Note that, in contrast to the infinite dimensional case, endomorphism
bundles need not be trivial bundles.   There is one improvement
 possible, and
we are indebted to Peter Gilkey for this explicit construction.

\begin{Cor} The vector bundle $V \to X$ in Proposition \ref{endpro} may be taken to have
 trivial first Chern class, so that
its structural bundle may be reduced to
an $SU_n$-bundle.
\end{Cor}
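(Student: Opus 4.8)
The plan is to exploit the fact that the vector bundle $V$ furnished by Proposition \ref{endpro} is only well-defined up to tensoring by a complex line bundle. Indeed, if $L \to X$ is any line bundle then there is a canonical isomorphism $\mathrm{End}(V \otimes L) \cong \mathrm{End}(V)$, since $(V \otimes L) \otimes (V \otimes L)^* \cong V \otimes V^* \otimes (L \otimes L^*) \cong \mathrm{End}(V)$. Thus for every line bundle $L$ the twist $V' = V \otimes L$ again satisfies $\PP\zeta \cong \mathrm{End}(V')$, and I am free to replace $V$ by any such $V'$. The whole corollary therefore reduces to a statement about which first Chern classes are attainable as $c_1(V')$ when $L$ ranges over all line bundles.

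First I would record the Chern-class computation $c_1(V \otimes L) = c_1(V) + n\, c_1(L)$, valid for $V$ of rank $n$. Recalling the identification $\check H^2(X;\ZZ) \cong Vect_1(X)$ by the first Chern class, every class in $\check H^2(X;\ZZ)$ is realized as $c_1(L)$ for some line bundle $L$. Hence the set of attainable classes $c_1(V')$ is exactly the coset $c_1(V) + n\,\check H^2(X;\ZZ)$. Consequently I can choose $V'$ with $c_1(V') = 0$ precisely when $c_1(V)$ is divisible by $n$ in $\check H^2(X;\ZZ)$; and $c_1(V') = 0$ says that $\det V'$ is the trivial line bundle, which is exactly a reduction of the structural group from $U_n$ to $SU_n$.

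The main obstacle is therefore the single arithmetic-topological point that $c_1(V)$ is divisible by $n$, equivalently that the reduction $c_1(V) \bmod n \in \check H^2(X;\ZZ/n)$ vanishes. To attack this I would feed the fibration $K(\ZZ,2) \to BU_n \to BPU_n$ into the Serre spectral sequence: the restriction of $c_1$ to the fibre $K(\ZZ,2)$ is $n$ times the generator, because the composite $S^1 = Z(U_n) \hookrightarrow U_n \overset{\det}{\longrightarrow} S^1$ is the $n$-th power map. Hence $c_1 \bmod n$ restricts to zero on the fibre and is pulled back from a generator $w$ of $\check H^2(BPU_n;\ZZ/n) \cong \ZZ/n$; moreover the Dixmier-Douady map $\delta$ is the integral Bockstein $\beta w$. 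Writing $g\colon X \to BPU_n$ for the classifying map of $\PP\zeta$, one gets $c_1(V) \bmod n = g^* w$, and the hypothesis $\delta(A_\zeta) = 0$ yields $\beta(g^* w) = 0$. The delicate step — the one I expect to carry the real content of Gilkey's explicit construction — is passing from $\beta(g^* w) = 0$ to $g^* w = 0$: the Bockstein sequence only places $g^* w$ in the image of the reduction $\check H^2(X;\ZZ) \to \check H^2(X;\ZZ/n)$, so some further input (an explicit choice of $L$, or a hypothesis on $X$) is required to conclude that the coset $c_1(V) + n\,\check H^2(X;\ZZ)$ actually contains $0$. I would verify this last implication with particular care, since the expectation that $\delta = 0$ alone forces divisibility is exactly the sort of claim that demands scrutiny on small torsion examples.
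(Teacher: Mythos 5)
Your reduction of the Corollary to the divisibility of $c_1(V)$ by $n$ is exactly right, and the suspicion you voice in your final paragraph is vindicated: the step from $\delta(A_\zeta)=0$ to $c_1(V)\in n\,\check{H}^2(X;\ZZ)$ is not merely delicate, it is false in general, and the Corollary fails as stated. The paper's own proof commits precisely the error you flag: it takes $L$ with $c_1(L)=-c_1(V)$ and sets $V'=V\tensor L$, tacitly using the rank-one formula $c_1(V\tensor L)=c_1(V)+c_1(L)$; for $V$ of rank $n$ one has $c_1(V\tensor L)=c_1(V)+n\,c_1(L)$, as you record, so the paper's $V'$ has $c_1(V')=(1-n)\,c_1(V)$, which need not vanish. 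As you say, the attainable first Chern classes form the coset $c_1(V)+n\,\check{H}^2(X;\ZZ)$, whose image in $\check{H}^2(X;\ZZ/n)$ is the invariant $\gamma(\zeta)$ of the commutative diagram appearing after the Corollary; and $\delta=\beta\circ\gamma$ being zero only says that $\gamma(\zeta)$ lifts to an integral class (which is automatic once a lift $V$ exists at all), not that $\gamma(\zeta)=0$.

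A concrete counterexample: take $X=S^2$ and $n=2$. Since $\check{H}^3(S^2;\ZZ)=0$, every principal $PU_2$-bundle over $S^2$ satisfies $\delta=0$; but $[S^2,BPU_2]\cong\pi_1(PU_2)\cong\ZZ/2$, and the nontrivial bundle is $\mathrm{End}(V)$ for $V$ of rank $2$ with $c_1(V)$ a generator of $\check{H}^2(S^2;\ZZ)\cong\ZZ$. Every other lift is $V\tensor L$ with $c_1(V\tensor L)=c_1(V)+2\,c_1(L)$ odd, so no lift has trivial first Chern class and the structure group does not reduce to $SU_2$. The correct hypothesis is $\gamma(A_\zeta)=0$, which is exactly what part (2)(b) of the theorem following the Corollary demands. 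In short: your proposal does not close the gap, but no argument can close it without an added hypothesis; you have located the precise point at which the paper's own proof breaks down.
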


\begin{proof} Suppose that $\PP\zeta  \cong \mathrm{End}(V)$ as in the proposition. Let $L$ be
 a complex line bundle over $X$ with $c_1(L) = - c_1(V)$. Let $V' = V\tensor L$.
  Then using the fact that $L \tensor L^*$ is a trivial line bundle we have
\[
\mathrm{End}(V') \,\cong\, (V')^* \tensor V' \,\cong\,
 V\tensor V^*\tensor L \tensor L^* \,\cong\, V \tensor V^*
\,\cong\, \mathrm{End}(V)
\]
so we may replace $V$ by $V'$ and obtain  the same endomorphism bundle.
\end{proof}

Note that even though
$V$ and $V'$ have isomorphic endomorphism bundles, in general
they will not themselves be isomorphic. In fact, $End(V) \cong End(V') $
if and only if $V' \cong V\otimes L$ for some line bundle $L$.

 We can refine this observation as follows.
The diagram above expands to a natural commuting diagram
\[
\CD
SU_n @>>> PU_n @>>> K(\ZZ /n , 1) @>>> BSU_n @>>> BPU_n @>\gamma >> K(\ZZ /n , 2)  \\
@VVV  @VV{1}V  @VVV  @VVV @VV{1}V @VV\beta V   \\
  U_n @>>> PU_n @>>> K(\ZZ , 2) @>>> BU_n @>\epsilon >> BPU_n
  @>\delta >> K(\ZZ, 3)  .
\endCD
\]

The Dixmier-Douady map $\delta : BPU_n \to K(\ZZ , 3) $
factors as
\[
BPU_n \overset{\gamma}\longrightarrow K(\ZZ /n, 2) \overset{\beta}\longrightarrow K(\ZZ , 3).
\]
The map $\beta $ induces the Bockstein homomorphism
\[
\beta : \check{H}^2(X; \ZZ /n ) \to \check{H}^3(X; \ZZ )
\]
with
\[
Ker(\beta ) = \{ x \in \check{H}^2(X ; \ZZ /n ) \,\,:\,\, x \,\, {\text{lifts to an integral class in}} \,\,\check{H}^2(X ; \ZZ  )\}
\]
and
\[
Image (\beta ) = \{ x \in \check{H}^3(X ; \ZZ ) \,\,:\,\,  nx = 0 \}
\]
whose image lies in the torsion subgroup of $\check{H}^3(X; \ZZ )$.

\begin{Thm}
Let $X$ be a compact space and let $n \in \mathbb{N}$. Then:
\begin{enumerate}
\item There is a natural exact sequence
\[
0 \to
\check{H}^2(X ; \ZZ ) \overset{\sigma}\longrightarrow Vect_n(X) \overset{\epsilon}\longrightarrow [X, BPU_n ]
\overset{\delta}\longrightarrow \check{H}^3(X;\ZZ ) .
\]
\item    Suppose given a fibre
bundle $\zeta $  over the compact space $X$  with fibre $M_n $ and associated
$C^*$-algebra $A_\zeta $. Then
\begin{enumerate}
\item If $\gamma(A_\zeta ) \neq 0$ but $\delta (A_\zeta ) = 0 $ then $\gamma (A_\zeta )$ lifts to an integral class
in $\check{H}^2(X ; \ZZ ) $.
\item  If $\gamma(A_\zeta ) = 0$ then $\mathbb{P}\zeta  \cong End(V)$ with $c_1(V) = 0$.
\end{enumerate}

\item The Dixmier-Douady invariant is additive, in the sense that
\[
\delta _{\zeta _1 \otimes \zeta_2} = \delta _{\zeta_1} + \delta _{\zeta_2}.
\]
\item The invariant respects conjugation:
\[
\delta _{\zeta^*} = - \delta _\zeta.
\]
\item For any $M_n$-bundle $\zeta$, it is the case that $n\delta _\zeta = 0$.
\end{enumerate}
\end{Thm}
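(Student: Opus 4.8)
The entire theorem is powered by a single input: the Barratt--Puppe fibration sequence attached to the central extension $S^1 \to U_n \to PU_n$,
\[
\cdots \to [X,U_n] \to [X,PU_n] \overset{c}\longrightarrow \check H^2(X;\ZZ) \overset{\sigma}\longrightarrow Vect_n(X) \overset{\epsilon}\longrightarrow [X,BPU_n] \overset{\delta}\longrightarrow \check H^3(X;\ZZ),
\]
together with the three-row commutative ladder relating the $SU_n$-, $U_n$-, and $PU_n$-fibrations already assembled in the excerpt (the diagram displaying $\gamma$ and $\beta$), and the tensor stabilization $PU_n \to P\UU$ comparing the finite picture with the already-established infinite-dimensional Theorem. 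The plan is to read (1) and (2) directly off the first two ingredients, to deduce the multiplicative statements (3)--(4) from the infinite case by naturality, and to extract (5) from the Bockstein factorization.

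For (1) I apply $[X,-]$ to the sequence above. Exactness of a Puppe sequence at $Vect_n(X)$ and at $[X,BPU_n]$ is automatic, so the work is to identify the maps geometrically. The map $\check H^2 \to Vect_n(X)$ deloops the central inclusion $S^1 \hookrightarrow U_n$, $z \mapsto zI$, whence $\sigma(L) = L^{\oplus n} = \underline{\CC}^{\,n}\otimes L$; the map $\epsilon$ is $V \mapsto \mathrm{End}(V)$; and $\delta$ is the Dixmier--Douady class. Exactness at $Vect_n(X)$ then says exactly that $\PP\zeta$ is trivial iff $V \cong \underline{\CC}^{\,n}\otimes L$, matching the tensor-by-a-line-bundle ambiguity recorded after the Corollary. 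The step I expect to be the main obstacle is the leftmost ``$0 \to$'': injectivity of $\sigma$ is equivalent to the vanishing of $\mathrm{im}\,c$, where $c$ is pullback against the order-$n$ transgression class of the circle bundle $U_n \to PU_n$ in $H^2(PU_n;\ZZ)$. This is precisely where $n$-torsion in $\check H^2(X;\ZZ)$ intervenes (on a lens space $L(n)$ every $L^{\oplus n}$ is already trivial, so $\sigma$ vanishes), and pinning down the honest hypothesis -- either $n$-torsion-freeness of $\check H^2(X;\ZZ)$, or reading the left end as exactness of pointed sets $\sigma^{-1}(\ast)=\mathrm{im}\,c$ -- is the genuine content of this part.

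For (2) I pass to the expanded ladder, which factors $\delta$ as $BPU_n \overset{\gamma}\to K(\ZZ/n,2) \overset{\beta}\to K(\ZZ,3)$ with $\beta$ the Bockstein. For (a), $\delta(A_\zeta)=\beta\gamma(A_\zeta)=0$ places $\gamma(A_\zeta)$ in $\ker\beta$, and the displayed description of $\ker\beta$ says precisely that such a class lifts to an integral class in $\check H^2(X;\ZZ)$ -- so (a) is immediate. For (b), $\gamma(A_\zeta)=0$ means, by exactness of the top row $BSU_n \to BPU_n \overset{\gamma}\to K(\ZZ/n,2)$, that the classifying map $X \to BPU_n$ lifts through $BSU_n$; a lift is an $SU_n$-bundle, i.e. a rank-$n$ vector bundle $V$ with $c_1(V)=0$ whose conjugation bundle is $\PP\zeta$, so $\PP\zeta \cong \mathrm{End}(V)$ with $c_1(V)=0$. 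Equivalently one invokes Proposition \ref{endpro} and its Corollary, since $\gamma=0$ forces $\delta=0$.

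The multiplicative statements reduce to the infinite-dimensional Theorem. The tensor stabilization $u \mapsto u\otimes I_\HH$, $U_n \to \UU$, carries the central $S^1$ to the central $S^1$ (since $zI_n\otimes I = z I_\HH$), hence gives a map of central extensions, a map of Puppe sequences, and a homotopy-commutative square exhibiting the finite $\delta$ as the composite $BPU_n \to BP\UU \overset{\simeq}\to K(\ZZ,3)$; thus the finite invariant is the restriction of the infinite one. For (3) I combine this with the homotopy-commuting tensor square relating $BPU_r \times BPU_s \to BPU_{rs}$ to $BP\UU \times BP\UU \to BP\UU$ -- commutativity follows from choosing the isomorphisms $\CC^r\otimes\CC^s\cong\CC^{rs}$ and $\HH\otimes\HH\cong\HH$ compatibly, possible since the relevant unitary groups are connected and $\UU$ is contractible -- so additivity is inherited from the already-proved infinite case. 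Part (4) is identical, with the tensor map replaced by the compatible conjugation involutions on $PU_n$ and $P\UU$, whence infinite (4) gives finite (4). Finally (5) is immediate from the factorization: since $\mathrm{Image}(\beta)=\{x : nx=0\}$, we get $n\,\delta_\zeta = n\,\beta\gamma(A_\zeta)=0$. The one substantive obstacle remains the left-exactness in (1); everything else is naturality or a direct reading of the Bockstein ladder.
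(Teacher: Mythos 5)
Your architecture matches the paper's almost step for step: the paper's proof is a terse ``most of this has already been established,'' referring exactly to the ingredients you name --- exactness of the Puppe sequence for $S^1 \to U_n \to PU_n$, the factorization $\delta = \beta\circ\gamma$ with the displayed kernel and image of the Bockstein (which gives (2)(a) and (5) just as you say), the $BSU_n$-lifting/Gilkey corollary for (2)(b), and ``(3) and (4) follow as in the infinite-dimensional case.'' Your stabilization argument $U_n \to \UU$, $u \mapsto u \tensor I_{\HH}$, making (3) and (4) literal corollaries of the infinite theorem, is a mild variant of the paper's intent (the paper means: rerun the delooping computation of $m$ on $\check H^2(K(\ZZ,2)\times K(\ZZ,2);\ZZ)$), and is if anything cleaner.

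The one place you stop short --- the leftmost ``$0\to$'' in (1) --- is precisely where the paper does something you did not anticipate, and your instinct that this is the genuine content is vindicated: the paper defines $\sigma(c) = L \oplus \theta^{n-1}$ with $c_1(L)=c$, which is split injective via $c_1$ since $c_1(L\oplus\theta^{n-1})=c$. But note what this buys and what it costs. This $\sigma$ is \emph{not} the map in the Puppe sequence: the central inclusion $z \mapsto zI_n$ induces $c \mapsto L\tensor\theta^n = L^{\oplus n}$ on bundles, which is your map. With the paper's $\sigma$ the sequence is not even a complex at $Vect_n(X)$: for $X=S^2$, $n=2$, $c$ the generator, $\epsilon(\sigma(c))$ is the algebra bundle $\mathrm{End}(H\oplus\theta)$, which is nontrivial because $\ker\epsilon$ consists exactly of bundles of the form $L'\tensor\theta^n$, and $c_1(H\oplus\theta)$ is odd while $c_1(L'\tensor\theta^2)$ is even. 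With your (Puppe) $\sigma$ exactness holds but injectivity fails, exactly as your lens space example shows: on $S^3/\ZZ_n$ the bundle $L^{\oplus n}$ has vanishing Chern classes and rank $n\geq 2$ over a $3$-complex, hence is trivial by stable-range considerations, while $c\neq 0$ in $\check H^2 \cong \ZZ/n$. So no single map makes statement (1) literally true as written, and your two proposed repairs --- assume $\check H^2(X;\ZZ)$ has no relevant torsion, or drop the $0$ and read the left end as pointed-set exactness $\sigma^{-1}(\ast)=\mathrm{im}(c)$ --- are the honest resolutions. In this respect your proposal is more careful than the paper's own proof, which silently trades exactness for injectivity; everything else in your write-up is correct and faithful to the paper's route.
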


\begin{proof}
Most of this result has already been established.
The map $\sigma $ takes a class $c \in \check{H}^2(X; \ZZ) $
 and associates to it a vector bundle of the form $L\oplus \theta ^{n-1}$
 where $L$ is a line bundle with first Chern class $c$ and $\theta ^{n-1} $
 is a trivial bundle of dimension $n-1$. This map is one-to-one since
 it is split by the first Chern class map
\[
 c_1 : \mathrm{Vect}_n(X) \to \check{H}^2(X;\ZZ ).
 \]
Parts (3) and (4) follow as in the infinite-dimensional case.
\end{proof}

 The various maps
 $\alpha_{rs}  : PU_r \to PU_{rs} $
induce maps on classifying spaces which by abuse of language are also
denoted
 $\alpha_{rs}  : BPU_r \to BPU_{rs} $.
These maps form a directed system
 $\{ BPU_r , \alpha _{rs} \} $. Write $BPU_\infty $ for the colimit.
Note that this is \emph{not} the same as $BP\mathcal{U} = K(\ZZ , 3)$.

\begin{Pro} (Serre, \cite{GS}, pp. 228-229)

\begin{enumerate}
\item  The natural map
\[
\dirlim \pi _j(BPU_n) \to \pi _j(BPU_\infty ).
\]
is an isomorphism.
\item $\pi _2(BPU_\infty ) \cong \QQ /\ZZ $.
\item If $j \geq 2$ then $\pi _{2j}(BPU_\infty ) \cong \QQ $.
\item If  $j \geq 2$ then  $\pi_{2j - 1}(BPU_\infty ) = 0$.

\item There is a natural splitting
\[
BPU_\infty \simeq K(\QQ /\ZZ , 2) \times F
 \]
 with $\pi _j (F)  = \QQ $ for $j \geq 4 $ and even and $\pi _j (F)  = 0 $ otherwise.
\end{enumerate}
\end{Pro}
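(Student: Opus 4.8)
The plan is to push the functor $\pi_j$ through the colimit, read off the transition maps explicitly, and then, for part (5), peel the torsion part of $\pi_2$ off as a direct factor using an H-space structure on $BPU_\infty$. For (1), I note that $(\mathbb{N}, \mid)$ is directed and contains the cofinal factorial sequence $1 \mid 2 \mid 6 \mid 24 \mid \cdots$, so I may realize $BPU_\infty$ as the mapping telescope of a sequence of the $BPU_n$. Since each sphere $S^j$ is compact, every map $S^j \to BPU_\infty$ and every homotopy of such maps factors through a finite stage; this is the standard argument that $\pi_j$ commutes with directed colimits, giving $\pi_j(BPU_\infty) \cong \dirlim \pi_j(BPU_n)$. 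For (2), the excerpt already records that $(\alpha_{rs})_* \colon \pi_2(BPU_r) \to \pi_2(BPU_{rs})$ is $\ZZ/r \xrightarrow{s} \ZZ/{rs}$; identifying $\ZZ/n$ with $\tfrac1n \ZZ / \ZZ \subset \QQ/\ZZ$ converts these into the inclusions of the subgroups $\tfrac1n\ZZ/\ZZ$, whose union is $\QQ/\ZZ$, so $\dirlim \ZZ/n \cong \QQ/\ZZ$.

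For (3) and (4), the fibration $S^1 \to U_n \to PU_n$ gives $\pi_j(BPU_n) \cong \pi_j(BU_n)$ for $j > 2$, and for $n$ large (the stable range) this is $\pi_j(BU)$, which is $\ZZ$ for $j$ even and $0$ for $j$ odd by Bott periodicity. Because $\alpha_{rs}(A) = A \otimes I_s$, the induced map $BU_r \to BU_{rs}$ is $V \mapsto V \otimes \CC^s = V^{\oplus s}$, which on $\pi_{2i}(BU) \cong \tilde K^0(S^{2i}) = \ZZ$ is multiplication by $s$; the commuting square relating $BU_n$ to $BPU_n$ shows the same on $\pi_{2i}(BPU_n)$ for $2i > 2$. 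Hence for even $j = 2i \geq 4$ the colimit is $\dirlim(\ZZ \xrightarrow{s} \ZZ) \cong \QQ$ (again via $\tfrac1n\ZZ \subset \QQ$), while for odd $j \geq 3$ the system is eventually zero; together with $\pi_1 = 0$ this gives (3) and (4).

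For (5), the tensor maps $BPU_r \times BPU_s \to BPU_{rs}$ are compatible with the transition maps, since $(V \otimes \CC^a) \otimes (W \otimes \CC^b) \cong (V \otimes W) \otimes \CC^{ab}$, so they assemble into a homotopy-commutative, homotopy-unital H-space multiplication $\mu$ on $BPU_\infty$ (the unit being $BPU_1 = *$). Let $u \colon BPU_\infty \to K(\QQ/\ZZ, 2)$ induce the identity on $\pi_2$ and let $F \xrightarrow{\iota} BPU_\infty \xrightarrow{u} K(\QQ/\ZZ, 2)$ be the homotopy fibre; the long exact sequence forces $\pi_2(F) = \pi_1(F) = 0$ and $\pi_j(F) \cong \pi_j(BPU_\infty)$ for $j \geq 3$, so $F$ has exactly the homotopy groups claimed. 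I then construct a section of $u$ by obstruction theory: the obstructions to a section, and to its essential uniqueness, lie in the groups $H^{*}(K(\QQ/\ZZ, 2); \pi_j F)$ with $\pi_j F = \QQ$, and these all vanish because $\QQ/\ZZ$ is torsion and hence $K(\QQ/\ZZ, 2)$ has trivial reduced rational cohomology. With a section $s$ in hand, the map $\Psi(x, y) = \mu(\iota(x), s(y))$ from $F \times K(\QQ/\ZZ, 2)$ to $BPU_\infty$ induces the sum of $\iota_*$ and $s_*$ on homotopy, hence an isomorphism in every degree, so it is a homotopy equivalence.

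I expect two steps to demand the most care. The first is identifying the transition maps on the higher homotopy groups as multiplication by $s$: this rests on recognizing $\alpha_{rs}$ as $V \mapsto V^{\oplus s}$ and on staying inside the stable range while running through the colimit. The second, and the real crux of (5), is the splitting itself — one must produce the H-space structure on the colimit and then exploit the rational acyclicity of $K(\QQ/\ZZ, 2)$ to kill the obstructions to a section. Without that vanishing there would be no reason for $BPU_\infty$ to split off the torsion bottom of its Postnikov tower.
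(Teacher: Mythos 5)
Your proof is correct and follows essentially the same route as the paper: compute $\pi_*$ of the colimit via compactness of spheres and the transition maps $\ZZ/r \xrightarrow{\;s\;} \ZZ/rs$ and $\ZZ \xrightarrow{\;s\;} \ZZ$ (Bott periodicity in the stable range), then split off $K(\QQ/\ZZ,2)$ using the vanishing of the rational cohomology of the base. The paper merely asserts that the fibration over $K(\QQ/\ZZ,2)$ is trivial; your obstruction-theoretic section combined with the tensor-product H-space structure on $BPU_\infty$ supplies the mechanism it leaves implicit.
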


\begin{proof} Each map $\alpha _{rs} :BPU_r \to BPU_{rs}$ is a
cofibration and so (1) is immediate. We showed previously
that $\pi _2(BPU_r) \cong \ZZ /r $. The map induced by
$\alpha _{rs}$ takes the generator of $\pi _2(BPU_r)$ to
$s$ times the generator of $\pi _2(BPU_{rs})$ and so
\[
\pi _2(BPU_\infty ) = \dirlim (\ZZ /{r}, \alpha _* ) = \QQ / \ZZ .
\]
For $n >> j > 2$,   $\pi _{2j}(BPU_n)  \cong \pi _{2j}(BU_n) \cong \ZZ $
by Bott periodicity and it follows easily that
\[
\pi _{2j}(BPU_\infty) \,\,\cong\,\, \dirlim (\pi _{2j}(BPU_n) , \alpha _* ) = \QQ .
\]
Similarly, in odd degrees   homotopy groups vanish  and calculation
yields the result.
One obtains a fibration   $BPU_\infty  \to K( \QQ /\ZZ  ,2)$ ; call the fibre $F$.
Then the homotopy of $F$ is as stated and as the base space has trivial rational
cohomology.  This implies that the fibration is trivial.
 \end{proof}

The various Dixmier-Douady maps
\[
\delta  : [X, BPU_n ]  \longrightarrow \check{H}^3(X ; \ZZ )
\]
are coherent and hence pass to the limit to produce
an induced Dixmier-Douady map
\[
\delta ^\infty  : [X, BPU_\infty ]  \longrightarrow \check{H}^3(X ; \ZZ ) .
\]

It is obvious that $\delta ^\infty $ takes values in the torsion
subgroup of $\check{H}^3(X; \ZZ )$. In fact, more is true:

\begin{Pro}
Let $X$ be a compact space. Then:
\begin{enumerate}
\item
The  image of the map $\delta ^\infty $ is the whole torsion subgroup
of $\check{H}^3(X; \ZZ )$.
\item
Let $x \in \check{H}^3(X; \ZZ ) $ be a torsion class. Then there is some finite $n$ and
some principal $PU_n$-bundle $\zeta $ over $X$ such that
  $\delta (A_\zeta  ) = x$.
\end{enumerate}
\end{Pro}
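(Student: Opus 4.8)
The plan is to deduce everything from the product decomposition $BPU_\infty \simeq K(\QQ/\ZZ, 2) \times F$ of the preceding proposition, by identifying $\delta^\infty$ as a Bockstein homomorphism composed with a split projection; part (2) then follows from part (1) by a compactness argument. First I would pass to the colimit in the factorization $\delta = \beta \circ \gamma$ established at each finite stage in the $M_n$ discussion. The squares relating $\gamma : BPU_n \to K(\ZZ/n, 2)$ and the Bockstein $\beta : K(\ZZ/n, 2) \to K(\ZZ, 3)$ to the maps $\alpha_{rs}$ commute up to homotopy, so they assemble to a colimit factorization. Since the transition maps on $\pi_2$ are the inclusions $\ZZ/r \to \ZZ/rs$ sending the generator to $s$ times the generator, whose colimit is $\QQ/\ZZ$, and since homotopy commutes with filtered colimits, one has $\dirlim_n K(\ZZ/n, 2) \simeq K(\QQ/\ZZ, 2)$; the Bocksteins assemble to the Bockstein $\tilde\beta : K(\QQ/\ZZ, 2) \to K(\ZZ, 3)$ for the coefficient sequence $0 \to \ZZ \to \QQ \to \QQ/\ZZ \to 0$. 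Thus $\delta^\infty = \tilde\beta \circ \gamma_\infty$, where $\gamma_\infty : BPU_\infty \to K(\QQ/\ZZ, 2)$ is the colimit of the maps $\gamma$.

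For part (1), I would next identify $\gamma_\infty$ with the projection onto the first factor of $BPU_\infty \simeq K(\QQ/\ZZ, 2) \times F$. This holds because $\gamma_\infty$ induces an isomorphism on $\pi_2 \cong \QQ/\ZZ$ (each finite $\gamma$ does) and annihilates $\pi_j(F)$ for $j \geq 4$ (the target has no higher homotopy). Hence on homotopy classes $\gamma_\infty$ induces the projection $[X, BPU_\infty] \cong \check{H}^2(X; \QQ/\ZZ) \times [X, F] \to \check{H}^2(X; \QQ/\ZZ)$, which is surjective, using representability of \v Cech cohomology. Therefore $\mathrm{Image}(\delta^\infty) = \mathrm{Image}(\tilde\beta)$. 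Finally, the long exact sequence in \v Cech cohomology for $0 \to \ZZ \to \QQ \to \QQ/\ZZ \to 0$ identifies $\mathrm{Image}(\tilde\beta)$ with the kernel of the map $\check{H}^3(X; \ZZ) \to \check{H}^3(X; \QQ)$; since $\check{H}^3(X; \QQ) \cong \check{H}^3(X; \ZZ) \tensor \QQ$ on compact spaces, this kernel is exactly the torsion subgroup of $\check{H}^3(X; \ZZ)$, proving (1).

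For part (2), let $x$ be a torsion class. By (1) there is a map $g : X \to BPU_\infty$ with $\delta^\infty[g] = x$. Since each $\alpha_{rs}$ is a cofibration, $BPU_\infty$ carries the colimit topology, so the compact image $g(X)$ lies in some finite stage $BPU_n$; thus $g$ factors up to homotopy as $X \overset{f}\to BPU_n \to BPU_\infty$. By coherence of the Dixmier-Douady maps, $\delta(f) = \delta^\infty[g] = x$, and $f$ classifies a principal $PU_n$-bundle $\zeta$ with $\delta(A_\zeta) = x$, as required.

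I expect the main obstacle to be the colimit identification of the first paragraph: one must check carefully that the finite mod-$n$ Bocksteins and the maps $\gamma$ pass coherently to the limit to yield precisely the rational Bockstein $\tilde\beta$ and the split projection $\gamma_\infty$. This is where Serre's computation of $\pi_*(BPU_\infty)$ and the product splitting are indispensable: without the splitting one could not conclude surjectivity of $\gamma_\infty$ on homotopy classes, and without the colimit-compatibility of the Bocksteins one could not pin the image down as the full torsion subgroup. The compactness step in part (2) is comparatively routine, relying only on the cofibration property of the $\alpha_{rs}$ already recorded above.
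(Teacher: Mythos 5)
Your proposal is correct and follows essentially the same route as the paper: both factor $\delta^\infty$ through the rational Bockstein $\tilde\beta : K(\QQ/\ZZ , 2) \to K(\ZZ , 3)$ via the equivalence $\dirlim K(\ZZ /n , 2) \simeq K(\QQ /\ZZ , 2)$, use the coefficient exact sequence to identify the image with the torsion subgroup of $\check{H}^3(X;\ZZ)$, and descend to a finite stage $BPU_n$ for part (2). The only difference is that you spell out, via the splitting $BPU_\infty \simeq K(\QQ/\ZZ , 2)\times F$ and the compactness argument, two steps the paper merely asserts (surjectivity of $[X, BPU_\infty] \to [X, K(\QQ/\ZZ ,2)]$ and the identification $[X, BPU_\infty ] \cong \dirlim [X, BPU_n]$).
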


\begin{proof}
The lattice of cyclic subgroups
of  $\QQ / \ZZ $
induces an equivalence
\[
\dirlim\, K(\ZZ /n , 2) \longrightarrow K(\QQ /\ZZ , 2)
\]
  Furthermore, the various Bockstein maps
$K(\ZZ /n , 2) \longrightarrow K(\ZZ, 3) $ all factor as
\[
K(\ZZ /n , 2) \longrightarrow K(\QQ /\ZZ , 2)
 \overset{\tilde\beta }\longrightarrow K(\ZZ, 3) .
\]
The exactness of the  coefficient sequence
\[
\check{H}^2(X ; \QQ /\ZZ ) \overset{\tilde\beta }\longrightarrow \check{H}^3(X ; \ZZ )
 \longrightarrow \check{H}^3(X ; \QQ )
\]
implies that the image of $ \tilde\beta  $ is exactly the
torsion subgroup of $\check{H}^3(X; \ZZ )$.
 This shows (1).

For (2), let $x$ be a torsion class. Then
  it is in the image of the Bockstein map
\[
\tilde\beta`:   \check{H}^2(X ; \QQ /\ZZ ) \to \check{H}^3(X; \ZZ )
\]
and thus may be represented as $x = \tilde\beta (y) $  where
$y \in [X, K(\QQ /\ZZ , 2]$.  The
map
\[
[X, BPU_\infty ]  \longrightarrow   [X, K(\QQ /\ZZ , 2]
\]
is onto and so the class $y$  lifts to some class
\[
z \in [X, BPU_\infty ] \cong  \dirlim [X, BPU_n ].
\]

Choose some $\zeta  \in [X, BPU_k ] $ representing $z$.
 (Note that if $x$ has order $n$ then $n$ divides
$k$ but in general $n \neq k$.)  Then
  $\delta (A_\zeta ) = x$ as required.
\end{proof}

\end{section}

\begin{section}{Twisted $K$-theory}

    Twisted $K$-theory was first introduced by Donovan and Karoubi \cite{DK} for finite-dimensional
    bundles and then by Rosenberg \cite{Ros} in the general case. In our context the point is
    to look at the $\ZZ /2 $-graded group $K_*(A_\zeta ) $.  Let $A_\zeta $ denote a continuous trace
    algebra over $X$. Recall that $K_0$ is defined for any unital
    ring as the Grothendieck group of finitely projective modules. For our purposes a topological definition
     is cleaner and so we may simply define
     \[
      K_j(A_\zeta ) \,\cong \pi _{j+1}(U(A_\zeta \otimes \KK ))   \qquad\qquad j \in \ZZ /2
      \]
      where in all cases we grade as $K_j = K^{-j} $ (and then note that by periodicity there are only two
      groups anyhow.)
 If the bundle is infinite-dimensional then it is not necessary to tensor with $\KK $ since the algebra is already stable.
  These groups are denoted in the literature by (for instance)
 \[
 K^*(X ; \zeta ) \,\,\text{or}\,\,\,   K^*(X ; \delta (\zeta ) )  \,\,\text{or}\,\,\,  K_{\delta (\zeta )}^*(X)
 \,\,\text{or}\,\,\,  K_\Delta ^*(X) .
\]
The point is that once one specifies $X$ and $\Delta = \delta (\zeta )  \in H^3(X ;\ZZ )$ then $A_\zeta $ is specified up to
equivalence and hence  $K_{\Delta }^*(X)$ makes sense as notation for $K_j(A_\zeta )$.  Here are the basic
properties:

\begin{Pro}
\begin{enumerate}
\item {\emph{Domain:\,\,}}The groups $K_{\Delta }^*(X)$ are defined for locally compact spaces $X$ and principal $PU_n$ or
$P\UU$-bundles $\zeta $ over $X$ with associated Dixmier-Douady class $\Delta = \delta (\zeta ) \in H^3(X;\ZZ)$.
\item {\emph{Naturality:\,\,}} Given $(X, \Delta  )$ together with a continuous function $f: Y \to X $ then there is induced
a map
\[
f^*:    K_{\Delta }^*(X)   \longrightarrow   K_{f^*\Delta }^*(Y)
\]
and twisted $K$-theory is natural with respect to these maps.
\item {\emph{Periodicity:\,\,}} The groups $K_{\Delta }^*(X)$  are periodic of period $2$.
\item {\emph{Product:\,\,}} There is a  cup product operation

\[
 K_{\Delta _1}^*(X)   \times K_{\Delta _2}^*(X)    \longrightarrow       K_{\Delta _1 + \Delta _2  }^*(X) .
\]
\item {\emph{Relation to untwisted $K$-theory:\,\,}} There is a natural isomorphism
\[
K_{0 }^*(X)  \cong K^*(X)
\]
where if $X$ is locally compact but not compact then $K$-theory with compact support is intended.
\end{enumerate}
\end{Pro}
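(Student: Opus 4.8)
The plan is to treat property (1) as supplying the genuine content of the proposition --- namely that $K^*_\Delta(X)$ is \emph{well defined} --- and to derive the remaining four properties as essentially formal consequences of the standard machinery of $C^*$-algebra $K$-theory applied to the section algebras $A_\zeta$. Throughout I will use the topological description $K_j(B) \cong \pi_{j+1}(U(B))$ for a stable $C^*$-algebra $B$, together with the basic facts that $K_*$ is a homotopy-invariant, Bott-periodic functor, that it is invariant under stabilization, $K_*(B) \cong K_*(B \otimes \KK)$, and that it carries an external product $K_*(B_1) \otimes K_*(B_2) \to K_*(B_1 \otimes B_2)$.

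The main obstacle is precisely the well-definedness required by the notation $K^*_\Delta(X)$: since this depends only on $X$ and $\Delta = \delta(\zeta)$, one must check that $K_*(A_\zeta)$ is unchanged when $\zeta$ is replaced by another bundle $\zeta'$ with $\delta(\zeta') = \Delta$. In the infinite-dimensional case this is immediate, because $\delta\colon [X, BP\UU] \to \check{H}^3(X;\ZZ)$ is a bijection, so $\zeta$, and hence $A_\zeta$, is determined by $\Delta$ up to isomorphism. The finite-dimensional case is the subtle one, since $\delta\colon [X, BPU_n] \to \check{H}^3(X;\ZZ)$ is far from injective. Here I would argue by stabilization. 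The definition already builds in a factor of $\KK$, and since $M_n \otimes \KK \cong \KK$, the algebra $A_\zeta \otimes \KK$ is the section algebra of the bundle with fibre $\KK$ obtained from $\PP\zeta$ by the associated-bundle construction along $PU_n \to P\UU$. That $\KK$-bundle is classified by the composite $X \to BPU_n \to BP\UU \simeq K(\ZZ,3)$, which is exactly $\delta(\zeta) = \Delta$. Thus $A_\zeta \otimes \KK$ depends, up to isomorphism, only on $\Delta$, and invariance of $K_*$ under stabilization gives $K_*(A_\zeta) \cong K_*(A_\zeta \otimes \KK)$, which therefore depends only on the pair $(X, \Delta)$. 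This settles (1) and simultaneously pins down the domain of the theory.

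Property (2), naturality, I would obtain by pulling back. A map $f\colon Y \to X$ pulls a principal bundle $\zeta$ back to $f^*\zeta$ over $Y$, and naturality of the classifying construction gives $\delta(f^*\zeta) = f^*\Delta$; moreover restriction of sections along $f$ yields a $*$-homomorphism $A_\zeta \to A_{f^*\zeta}$, whose induced map on $K$-theory is the desired $f^*\colon K^*_\Delta(X) \to K^*_{f^*\Delta}(Y)$, and functoriality of $K_*$ gives functoriality of the twisted theory. Property (3), periodicity, is nothing more than Bott periodicity for $C^*$-algebra $K$-theory, which is already built into the indexing $K_j = \pi_{j+1}(U(-))$ read modulo $2$.

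For the product (4), I would use the tensor-product operation on projective bundles introduced earlier: the map $P\UU \times P\UU \to P\UU$ (respectively $PU_r \times PU_s \to PU_{rs}$) produces from $\zeta_1, \zeta_2$ a bundle $\zeta_1 \otimes \zeta_2$ over $X$, and pointwise tensor product of sections defines a $*$-homomorphism $A_{\zeta_1} \otimes A_{\zeta_2} \to A_{\zeta_1 \otimes \zeta_2}$. Composing the external $K$-theory product with the induced map, and invoking additivity of the Dixmier-Douady invariant, $\delta(\zeta_1 \otimes \zeta_2) = \Delta_1 + \Delta_2$, yields the cup product landing in $K^*_{\Delta_1 + \Delta_2}(X)$. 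Finally, for (5), when $\Delta = 0$ the well-definedness established in (1) lets me compute $K^*_0(X)$ using an infinite-dimensional bundle, for which the Dixmier-Douady theorem identifies the algebra as $A_\zeta \cong C(X) \otimes \KK$ (or $C_o(X) \otimes \KK$ when $X$ is non-compact); hence $K_*(A_\zeta) \cong K_*(C(X)) \cong K^{-*}(X)$ by stability together with the standard identification of the $C^*$-algebra $K$-theory of $C(X)$ with topological $K$-theory, with compact supports when $X$ fails to be compact.
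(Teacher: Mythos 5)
Your proposal is sound, but it is worth knowing that the paper offers no argument at all here: the proposition is stated with an immediate \qed, the properties being quoted from the literature (Donovan--Karoubi, Rosenberg, Atiyah--Segal). So your sketch is not so much a different route as the only route on display, and you have correctly located the one point with real mathematical content, namely that the notation $K^*_{\Delta}(X)$ is legitimate because $K_*(A_\zeta)$ depends only on $(X,\Delta)$. Your stabilization argument for this is the standard one and meshes well with the paper's definition $K_j(A_\zeta)=\pi_{j+1}(U(A_\zeta\otimes\KK))$, which already builds in the tensor factor of $\KK$; the remaining properties then do follow formally from homotopy invariance, Bott periodicity, the external product, and the earlier additivity statement $\delta(\zeta_1\otimes\zeta_2)=\delta(\zeta_1)+\delta(\zeta_2)$, exactly as you say.

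Two steps deserve more care than you give them. First, your identification of the classifying map of the stabilized bundle with $\Delta$ rests on the claim that the composite $BPU_n\to BP\UU\simeq K(\ZZ,3)$ agrees with the finite-dimensional Dixmier--Douady map $\delta$, which the paper defines as the connecting map of the fibration $K(\ZZ,2)\to BU_n\to BPU_n$. This is true, but it requires observing that the inclusion $U_n\to\UU$, $u\mapsto u\otimes 1$ on $\CC^n\otimes\HH\cong\HH$, is the identity on the central $S^1$ and hence induces a map of fibration sequences that is the identity on the common cofibre $K(\ZZ,3)$; without that compatibility the well-definedness argument does not close. Second, naturality as you state it is only formal for compact $X$ (or for proper maps): an arbitrary continuous map of locally compact spaces does not induce a $*$-homomorphism on algebras of sections vanishing at infinity, so the restriction-of-sections map $A_\zeta\to A_{f^*\zeta}$ needs either properness or a passage to one-point compactifications. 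Neither point is fatal, but both should be flagged if the sketch is to be a proof.
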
\qed

Karoubi notes that the cup product is not canonically defined at the level of cohomology classes. For instance, in the
finite-dimensional case, one must choose representatives from among the various algebra bundles; i.e. choose
Morita equivalences which are not canonical in general.

Rosenberg \cite{RosHom} points out the simplest case where twisted $K$-theory actually does something
interesting. Take $X = S^3$. Then the Dixmier-Douady invariant takes values in $H^3(S^3; \ZZ ) \cong \ZZ $
and hence is determined by an integer $m$. Rosenberg shows that
\[
K_m^0(S^3) = 0 \qquad\qquad   K_m^{-1}(S^3) = \ZZ /m  .
\]
He takes us further by introducing a twisted Atiyah-Hirzebruch spectral sequence (for $X$ a finite complex) converging to $K_\Delta ^*(X)$
and with
\[
E_2^* = H^*(X ; K_*(\CC ) ).
\]
Just as with the classical Atiyah-Hirzebruch spectral sequence we have $d_2 = 0$. The differential $d_3 $ is determined by
the integral Steenrod operation $Sq_\ZZ ^3 $ (as is the case classically) and (the first mention of the twist) the   class $\Delta $:
\[
d_3(x) = Sq_\ZZ ^3(x) - \Delta x.
 \]
 This spectral sequence is developed further by Atiyah and Segal \cite{AS}, \cite{AS2} who show, for instance, that
 the spectral sequence does not collapse after rationalization.

 There are other ways to twist $K$-theory and to make it equivariant, to make it real (rather than complex) or both, and the reader should consult the papers of Atiyah and Segal \cite{AS}, \cite{AS2},  Freed Hopkins and Teleman \cite{FHT}, and especially the fine survey
 paper of Karoubi \cite{K} before burying oneself in the physics literature.  That physics comes into the
 picture goes back to the observation of Witten that D-brane charges in type IIB string theory over
 a space $M$ are elements of $K_\delta ^0(M) $ - see for instance \cite{BCMMS} who show that the Dixmier-Douady
 invariant classifies bundle gerbes up to stable isomorphism and see \cite{FH}, \cite{MMS}, \cite{MR}, \cite{NT} for
 a taste of further developments.

\end{section}

\begin{section}{Rational Homotopy}

For stable continuous trace algebras, the groups $\pi _*(UA_\zeta ) $ are periodic of period 2 and
in fact correspond to the twisted $K$-theory groups. However, if $\zeta $ is a principal $PU_n$-bundle
where $n$ is finite then the natural map
\[
\pi _j(UA_\zeta )  \to K_{j-1}(A_\zeta )
\]
is neither injective nor surjective in general. Furthermore, $K$-theory obscures the geometric dimension
of the space $X$; since $K^0(S^{2n}) $ doesn't depend on $n$ and hence cannot detect it.  In this situation
a more natural question is to calculate $\pi _*(UA_\zeta )$ itself.  This is impossible even in very elementary
cases (e.g. when $A = M_2(\CC )$ ).  A more reasonable project is to calculate the
 rational homotopy groups
\[
\pi _j(UA_\zeta ) \otimes \QQ
\]
and this has been done in general for $X$ compact by \cite{LPSS}, \cite{KSS}.  The answer depends
upon the individual groups $H^j(X; \QQ )$ and upon $n$. It turns out to be independent of the principal
bundle. This is to be expected, at least after the fact, since the Dixmier-Douady invariant is finite when the
bundle is finite-dimensional and hence is trivial in the world of rational homotopy.

\end{section}

\begin{section}{Generalizations}

 {\emph{What if $X$ is assumed to be   a  $CW$-complex or, more
 generally, a compactly generated space that is
 not necessarily compact?}} First,
the definition of $A_\zeta $ doesn't lead to a $C^\ast$-algebra since infinite $CW$-complexes
are not locally compact.  These would be pro-$C^\ast$-algebras such as studied by N. C. Phillips \cite{P}.
The good news is that some of the proofs in this note generalize.
The bundle classification results require
  restriction to Dold's numerable bundles \cite{Do}, \cite{Hu}. These are bundles that are trivial
  with respect to a locally finite cover, and so one can assume, for instance, that $X$ is paracompact.
 In the infinite dimensional
case the isomorphism
\[
[X, BP\UU ] \cong [X, K(\ZZ , 3) ]
\]
is  tautological, since $BP\UU \simeq K(\ZZ, 3) $.

In the finite dimensional situation we obtain maps
\[
\mathrm{Vect}_n^{num}(X) \overset{\epsilon}\to [X, BPU_n]
\overset{\delta}\to H_{\mathrm{sing}}^3(X; \ZZ )
\]
where $\mathrm{Vect}_n^{num}(X)  $ denotes isomorphism classes
of numerable vector bundles,
and the Dixmier-Douady results still hold when singular
cohomology is understood throughout.

 {\emph{What if $X$ is assumed to be  locally compact but
 not necessarily compact?}}   In that case the definition
 of $A_\zeta $ is modified to include only those sections that
 vanish at infinity, so that the sup norm is defined and
 then $A_\zeta $ is a $C^*$-algebra again. The Dixmier-Douady
 results still hold, but it is probably better in this setting
 to shift back to the sheaf-theoretic setting of the original proofs,
 since the classification of vector bundles over locally
 compact spaces is somewhat awkward.

\end{section}

\end{section}

\begin{thebibliography}{10}

\bibitem{AS} M.  Atiyah and G. Segal,
{\emph{Twisted K-theory}},
 Ukr. Mat. Visn.  \textbf{1}  (2004), no. 3, 287--330;  translation in  Ukr. Math. Bull.  \textbf{1}  (2004), no. 3, 291--334



\bibitem{AS2} M.  Atiyah and G. Segal,
{\emph{Twisted K-theory and cohomology}},
 Inspired by S. S. Chern,  5--43, Nankai Tracts Math., 11, World Sci. Publ., Hackensack, NJ, 2006.


\bibitem{BCMMS}
P. Bouwknegt,   A. L. Carey, Alan L.,  V. Mathai,   M. K. Murray,   D. Stevenson,
{\emph{Twisted $K$-theory and $K$-theory of bundle gerbes}},
Comm. Math. Phys. {\textbf{228}} (2002), no. 1, 17--45.





\bibitem{CMR}  J. Cuntz, R. Meyer, and J. Rosenberg,
{\emph{Topological and Bivariant K-Theory}},
 Oberwolfach Seminars \textbf{36}. Birkhäuser Verlag, Basel, 2007. xii+262 pp.



\bibitem{DD} J. Dixmier and D. Douady,
{\emph{ Champs continus d'espaces Hilbertiens et de $C^*$-alg\'ebres}},
Bull. Soc. Math. France {\textbf 91} (1963), 227 - 284.


\bibitem{Do} A. Dold,
{\emph{Partitions of unity in the theory of fibrations}},
Ann. Math. {\textbf{78}} (1963), 223 - 255.

\bibitem{DK} P. Donovan and M. Karoubi,
{\emph{Graded Brauer groups and K-theory with local coefficients}},
Inst. Hautes Etudes Sci. Publ. Math. {\textbf{38}} (1970), 5-25.





\bibitem{ES} S. Eilenberg and N. Steenrod,
{\emph{Foundations of Algebraic Topology}}, Princeton University
Press, Princeton, 1952.



\bibitem{GS}
{\emph{Correspondance Grothendieck-Serre.}},
Eds. P. Colmez and J.-P. Serre,
 Documents Math\'ematiques (Paris),2, Soc. Math. de France,
   Amer. Math. Soc. 2001.




\bibitem{FH}
D. Freed,  M.  Hopkins,
{\emph{ On Ramond-Ramond fields and $K$-theory}},
 J. High Energy Phys.  2000,  no. 5, Paper 44, 14 pp.



\bibitem{FHT}
D. Freed, M. Hopkins, C. Teleman,
{\emph{ Twisted equivariant $K$-theory with complex coefficients}},
  J. Topology  {\textbf{1}}  (2008),  no. 1, 16--44.






\bibitem{Hu} D. Husemoller,
{\emph{Fibre Bundles}}, Graduate Texts in Math. {\textbf{20}}, Second
Edition, Springer - Verlag, New York, 1975.


\bibitem{K} M. Karoubi,
{\emph{Twisted K-Theory, old and new}}, http://arxiv.org/abs/math.KT/0701789v3.
(to appear in Proceedings: K-theory and noncommutative geometry.
Ed. G. Cortiñas, J. Cuntz, M. Karoubi, R. Nest and C.A. Weibel,
European Mathematical Society, Zürich, 2008.










\bibitem{KSS}
J. Klein, C.L. Schochet, S.B. Smith,
{\emph{Continuous trace $C^\ast$-algebras, gauge groups and rationalization}},
	arXiv:0811.0771v1 [math.AT]



\bibitem{LPSS} G. Lupton, N. C. Phillips, C. L. Schochet, S.B. Smith,
{\emph{Banach algebras and rational homotopy theory}},
  Trans.\  Amer.\  Math.\  Soc.\ {\bf 361} (2009), 267--295.


\bibitem {MMS}
V. Mathai, R.B. Melrose, I. M.  Singer,
{\emph{ Fractional analytic index}},
  J. Differential Geom.  {\textbf{74}}  (2006),  no. 2, 265--292.


\bibitem{MR}
V. Mathai,  J. Rosenberg,
{\emph{$T$-duality for torus bundles with $H$-fluxes via noncommutative topology}},
  Comm. Math. Phys.  {\textbf{253}}  (2005),  no. 3, 705--721.




\bibitem{M} J. P. May,
{\emph{A Concise Course in Algebraic Topology}}, Chicago Lectures in
Math. University of Chicago Press, Chicago, 1999.



\bibitem{NT} V. Nistor and E. Troitzky,
{\emph{ An index for gauge-invariant operators and the Dixmier-Douady invariant}},
  Trans. Amer. Math. Soc.  {\textbf{356}}  (2004),  no. 1, 185--218.



\bibitem{P}
N. C. Phillips,
 {\emph{$C\sp \infty$ loop algebras and noncommutative Bott periodicity}},
  Trans. Amer. Math. Soc.  {\textbf{325}}  (1991),  no. 2, 631--659.





\bibitem{RW}
I. Raeburn and D. Williams,
{\emph{Morita Equivalence and Continuous Trace $C^*$-Algebras}},
Math. Surveys and Monographs Vol. {\textbf{60}}, Amer. Math.
Soc. 1998.


\bibitem {RosHom}
 J. Rosenberg,
{\emph{Homological invariants of extensions of $C\sp{\ast} $-algebras}},
 Operator algebras and applications, Part 1 (Kingston, Ont., 1980), pp. 35--75,
Proc. Sympos. Pure Math., {\textbf{38}}, Amer. Math. Soc., Providence, RI, 1982.

\bibitem{Ros}
J. Rosenberg, Continuous - trace algebras from the bundle - theoretic
point of view,
{\emph{J. Austral. Math. Soc. (Series A)}} {\textbf{47}} (1989), 368-381.


 \bibitem{Steen} N. Steenrod, {\emph{The Topology of Fibre Bundles}}, Princeton
 University Press, Princeton, 1951



\end{thebibliography}
\end{document}